\documentclass[a4paper,11pt]{amsart}

\usepackage{amsmath,amssymb,amsthm}
\usepackage{cite}
\allowdisplaybreaks[2]
\numberwithin{equation}{section}
\newtheorem{theorem}{Theorem}[section]
\newtheorem{proposition}[theorem]{Proposition}
\newtheorem{lemma}[theorem]{Lemma}
\newtheorem{corollary}[theorem]{Corollary}
\theoremstyle{definition}
\newtheorem{definition}[theorem]{Definition}
\newtheorem{remark}[theorem]{Remark}
\theoremstyle{plain}
\newcommand{\CP}{\mathbb{C}P}
\newcommand{\CH}{\mathbb{C}H}
\newcommand{\Cn}{\mathbb{C}^{n}}
\newcommand{\grad}{\operatorname{grad}}
\newcommand{\tr}{\operatorname{trace}}
\title[Biharmonic Lagrangian $H$-umbilical submanifolds]
{Biharmonic Lagrangian $H$-umbilical submanifolds
in complex space forms}

\author[Toru Sasahara]{Toru Sasahara}

\date{}

\begin{document}

\begin{abstract}
We give a complete classification of proper biharmonic Lagrangian
$H$-umbilical submanifolds in complex space forms.
Such submanifolds exist only in complex
projective space and are locally congruent to an explicit model
obtained via the Hopf fibration. In particular, their nonexistence
in complex Euclidean space provides a partial
affirmative answer to Chen's conjecture.
\end{abstract}

\keywords{Biharmonic submanifolds, Lagrangian submanifolds,
Lagrangian $H$-umbilical submanifolds, complex space forms}

\subjclass[2020]{Primary 53C42; Secondary 53B25}
\def\subjclassname{\textup{2020} Mathematics Subject Classification}

\makeatletter
\let\originalsettitle\@settitle
\def\@settitle{\setlength{\topskip}{12pt}\originalsettitle}
\makeatother
\maketitle
\vspace{-10pt}

\section{Introduction}

An immersed submanifold is called biharmonic if its defining isometric immersion is a critical point of the bienergy functional~\cite{Jiang1986}. A biharmonic submanifold that is not minimal is called proper biharmonic.
Biharmonic submanifolds in real space forms behave quite differently according as the ambient curvature is nonpositive or positive. Chen conjectured that every biharmonic submanifold of Euclidean space must be minimal. The conjecture remains open, but several partial results are known. For example, Fu, Hong, and Zhan proved that every biharmonic hypersurface in $\mathbb{R}^6$ is minimal~\cite{FuHongZhan2023}.

In the unit sphere, the small hypersphere $S^{m}(1/\sqrt{2})$ and the generalized Clifford tori $S^{p}(1/\sqrt{2}) \times S^{q}(1/\sqrt{2})$, with $p+q=m$ and $p\neq q$, are the basic examples of proper biharmonic hypersurfaces. Every proper biharmonic hypersurface with at most two distinct principal curvatures is locally congruent to one of these models~\cite{BalmusMontaldoOniciuc2008}. More recently, Andronic, Fu, and Oniciuc completed the classification for hypersurfaces with at most three distinct principal curvatures: if $m\ge4$, every such proper biharmonic hypersurface in $S^{m+1}$ is locally congruent to one of the same two standard models~\cite{AndronicFuOniciuc2025}.

Biharmonic submanifolds in complex space forms behave differently from those in real space forms. Fetcu, Loubeau, Montaldo, and Oniciuc related the bitension field of a submanifold of $\CP^n$ to that of its Hopf tube. They constructed new families of proper biharmonic submanifolds. They also characterized proper biharmonic curves in terms of their curvatures and complex torsions~\cite{FetcuLoubeauMontaldoOniciuc2010}. For real hypersurfaces in $\CP^n$, proper biharmonic Hopf hypersurfaces in $\CP^2$ and proper biharmonic hypersurfaces with two distinct principal curvatures have been classified. It has also been shown that every biharmonic ruled real hypersurface must be minimal~\cite{Sasahara2019}. The same paper also gives the corrected classification of proper biharmonic homogeneous real hypersurfaces in $\CP^n$. Inoguchi and the author subsequently revisited this classification and classified biharmonic homogeneous real hypersurfaces in quaternionic projective space~\cite{InoguchiSasahara2025}.

Every totally umbilical Lagrangian submanifold of dimension at least
two in a complex space form is totally geodesic~\cite{Chen2001}.
Chen introduced Lagrangian $H$-umbilical submanifolds as a natural
generalization~\cite{Chen1997b}. Biminimal Lagrangian $H$-umbilical
submanifolds with nonzero constant mean curvature were
classified in~\cite{BiminimalHumbilical2012}. Maeta and Urakawa
derived the biharmonic equations for Lagrangian submanifolds and
obtained a classification in the $H$-umbilical case, assuming that
the normalized mean curvature vector is
parallel~\cite{MaetaUrakawa2013}.
In this paper, we show that biharmonicity forces the length of the mean curvature vector to be constant. Using this fact together with known classification results, we obtain a complete classification of proper biharmonic Lagrangian $H$-umbilical submanifolds in complex space forms.

Let $\widetilde M^{n}(4\epsilon)$ denote a complex space form of complex dimension $n$ and constant holomorphic sectional curvature $4\epsilon$. After a homothety, we may normalize the nonflat cases with $\epsilon=1$ or $\epsilon=-1$. Let $\pi : S^{2n+1}(1) \to \CP^{n}(4)$ be the Hopf fibration. Our main result is the following classification theorem.
\begin{theorem}\label{thm:classification-intro}
Let
$f\colon M^n\to\widetilde M^n(4\epsilon)$ be a Lagrangian
$H$-umbilical immersion, where $n\ge2$ and
$\epsilon\in\{-1,0,1\}$. If $f$ is proper biharmonic, then
$\epsilon=1$ and $f$ is locally congruent to an open part of the
immersion
$F_\nu\colon\mathbb R\times S^{n-1}\to\CP^n(4)$ given by
 \begin{equation}\label{eq:model-intro}
 F_\nu(x,y_1,\ldots,y_n)=\pi\Biggl(
 \sqrt{\frac{\nu^2}{\nu^2+1}}e^{-ix/\nu},
 \sqrt{\frac{1}{\nu^2+1}}e^{i\nu x}y_1,\ldots,
 \sqrt{\frac{1}{\nu^2+1}}e^{i\nu x}y_n\Biggr),
 \end{equation}
where $y_1^2+\cdots+y_n^2=1$ and
 \begin{equation}\label{eq:nu-general}
 \nu^2=\frac{n+5\pm\sqrt{n^2+6n+25}}{2n}.
 \end{equation}
Conversely, every immersion in this family is proper biharmonic.
\end{theorem}

The immersion in~\eqref{eq:model-intro} is obtained from the Legendre curve $\gamma_\nu=(\gamma_1,\gamma_2)$ in
$S^3(1)\subset\mathbb C^2$ defined by
\[
 \gamma_1(x)=\sqrt{\frac{\nu^2}{\nu^2+1}}e^{-ix/\nu},
 \qquad
 \gamma_2(x)=\sqrt{\frac{1}{\nu^2+1}}e^{i\nu x}.
\]
This is a unit-speed Legendre curve, since
$|\gamma_\nu'|=1$ and
$\langle\gamma_\nu',i\gamma_\nu\rangle=0$. The curvature vector of
$\gamma_\nu$ in
$S^3(1)$ is
\[
 \nabla^{S^3}_{\partial_x}\gamma_\nu'
 =\left(\nu-\frac1\nu\right)i\gamma_\nu'.
\]
Thus its signed curvature is $\nu-\nu^{-1}$, and its curvature is
$|\nu-\nu^{-1}|$. The map
\[
 \Psi_\nu(x,y)=\bigl(\gamma_1(x),\gamma_2(x)y\bigr),
 \qquad y\in S^{n-1},
\]
is horizontal, and~\eqref{eq:model-intro} is precisely
$F_\nu=\pi\circ\Psi_\nu$. Thus $F_\nu$ is a Lagrangian
$H$-umbilical immersion obtained from a Legendre curve via the Hopf
fibration~\cite{Chen1997b}.

\section{Preliminaries}\label{sec:preliminaries}

Throughout this paper, all manifolds are assumed to be connected.

Let $M^n$ be an $n$-dimensional submanifold of a Riemannian manifold
$\widetilde M$. We denote by $\nabla$ and $\widetilde\nabla$ the
Levi--Civita connections on $M^n$ and $\widetilde M$, respectively.
The Gauss and Weingarten formulas are given by
\begin{align*}
 \widetilde\nabla_XY&=\nabla_XY+h(X,Y),\\
 \widetilde\nabla_X\xi&=-A_\xi X+\nabla_X^\perp\xi,
\end{align*}
respectively, where $X,Y$ are tangent vector fields, $\xi$ is a normal
vector field, and $h$, $A$, and $\nabla^\perp$ are the second
fundamental form, the shape operator, and the normal connection.
The mean curvature vector field $H$ is defined by
$H=n^{-1}\tr h$. The function $|H|$ is called the mean curvature. If
it vanishes identically, then $M^n$ is called a minimal submanifold.
In particular, if $h$ vanishes identically, then $M^n$ is called a
totally geodesic submanifold.

Let $\widetilde M^n(4\epsilon)$ be a complex space form of complex
dimension $n$ and constant holomorphic sectional curvature
$4\epsilon$. Its curvature tensor $\widetilde R$ is given by
\begin{align*}
 \widetilde R(X,Y)Z=\epsilon\{&
 \langle Y,Z\rangle X-\langle X,Z\rangle Y
 +\langle JY,Z\rangle JX-\langle JX,Z\rangle JY\\
 &+2\langle X,JY\rangle JZ\}.
\end{align*}
Here $\langle\ ,\ \rangle$ is the inner product and $J$ is the complex
structure of $\widetilde M^n(4\epsilon)$. A complete, simply connected
complex space form is holomorphically isometric to $\Cn$,
$\CP^n(4\epsilon)$, or $\CH^n(4\epsilon)$ according as
$\epsilon=0$, $\epsilon>0$, or $\epsilon<0$.

A submanifold of $\widetilde M^n(4\epsilon)$ is called Lagrangian if
$J$ interchanges its tangent and normal spaces. Let $M^n$ be a
Lagrangian submanifold of $\widetilde M^n(4\epsilon)$.
Denote by $R$ the curvature tensor of $M^n$. The equations of Gauss
and Codazzi are given by
\begin{align*}
 \langle R(X,Y)Z,W\rangle
={}&\epsilon\{\langle X,W\rangle\langle Y,Z\rangle
-\langle X,Z\rangle\langle Y,W\rangle\}\\
&+\langle[A_{JZ},A_{JW}]X,Y\rangle,\\
 (\overline\nabla_Xh)(Y,Z)&=(\overline\nabla_Yh)(X,Z),
\end{align*}
respectively, where
\begin{equation*}
 (\overline\nabla_Xh)(Y,Z)
 =\nabla_X^\perp h(Y,Z)-h(\nabla_XY,Z)-h(Y,\nabla_XZ).
\end{equation*}

Let $f\colon(M^n,g)\to(N,\overline g)$ be a smooth map between two
Riemannian manifolds. The tension field $\tau(f)$ of $f$ and the
bienergy $E_2(f;\Omega)$ over a relatively compact domain
$\Omega\subset M^n$ are defined by
\begin{equation*}
 \tau(f)=\tr_g(\nabla df),\qquad
 E_2(f;\Omega)=\frac12\int_\Omega|\tau(f)|^2\,dv_g.
\end{equation*}
If $f$ is a critical point of $E_2(f;\Omega)$ with respect to all
compactly supported variations for every relatively compact domain
$\Omega$, then $f$ is called a biharmonic map. Jiang~\cite{Jiang1986}
proved that $f$ is biharmonic if and only if its bitension field
vanishes identically:
\begin{equation*}
 \tau_2(f)=\sum_{i=1}^n\left\{
 \nabla^f_{e_i}\nabla^f_{e_i}\tau(f)
 -\nabla^f_{\nabla_{e_i}e_i}\tau(f)
 +\overline R\bigl(\tau(f),df(e_i)\bigr)df(e_i)
 \right\}=0.
\end{equation*}
Here $\overline R$ is the curvature tensor of $N$, $\nabla^f$ is the
induced connection on $f^*TN$, and $\{e_i\}$ is a local orthonormal
frame field on $M^n$.
If $f$ is an isometric immersion, then $\tau(f)=nH$. An immersed
submanifold is called biharmonic if its defining isometric immersion
$f$ is a biharmonic map, or equivalently, if $\tau_2(f)=0$. A
nonminimal biharmonic submanifold is called proper biharmonic.
Clearly, every minimal submanifold is biharmonic.
Following~\cite{BiminimalSurface2009,BiminimalHumbilical2012}, an isometric
immersion is called biminimal if the normal component of its
bitension field vanishes. This condition is called \emph{free
biminimality} in~\cite{Loubeau2008}.

\begin{definition}
A non-totally geodesic Lagrangian submanifold $M^n$ in
a complex space form $\widetilde M^n(4\epsilon)$ is called
Lagrangian $H$-umbilical if every point has a neighborhood $W$ on
which there exists an orthonormal frame field
$\{e_1,\ldots,e_n\}$ such that the second fundamental form takes the
following form:
\begin{equation}\label{eq:H-umbilical}
 \begin{aligned}
 h(e_1,e_1)&=\lambda Je_1,\qquad
 h(e_2,e_2)=\cdots=h(e_n,e_n)=\mu Je_1,\\
 h(e_1,e_j)&=\mu Je_j,\qquad
 h(e_j,e_k)=0,\qquad 2\le j\ne k\le n,
 \end{aligned}
\end{equation}
where $\lambda$ and $\mu$ are functions on $W$.
\end{definition}

For a Lagrangian $H$-umbilical submanifold,
\begin{equation*}
 H=aJe_1,\qquad
 a=\frac{\lambda+(n-1)\mu}{n}.
\end{equation*}
For $j>1$, the Gauss equation and~\eqref{eq:H-umbilical} give
\begin{equation}\label{eq:Gauss-H-umbilical}
 \langle R(e_1,e_j)e_j,e_1\rangle
 =\epsilon+\lambda\mu-\mu^2.
\end{equation}

\begin{proposition}\label{prop:biharmonic-equations}
Let $f\colon M^n\to\widetilde M^n(4\epsilon)$ be a Lagrangian isometric
immersion, and let $\{e_1,\ldots,e_n\}$ be a local
orthonormal frame field on $M^n$. Then $f$ is biharmonic if and only if the following equations hold\textup{:}
\begin{gather}
 -\sum_{i=1}^n\left(
 \nabla_{e_i}^\perp\nabla_{e_i}^\perp H
 -\nabla_{\nabla_{e_i}e_i}^\perp H\right)
 +\sum_{i=1}^nh(e_i,A_He_i)
 -(n+3)\epsilon H=0,\label{eq:normal-biharmonic}\\
 2\sum_{i=1}^nA_{\nabla_{e_i}^\perp H}e_i
 +\frac n2\grad |H|^2=0.
 \label{eq:tangential-biharmonic}
\end{gather}
\end{proposition}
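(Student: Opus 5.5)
The plan is to start from Jiang's bitension formula and split $\tau_2(f)$ into its normal and tangential parts. Since $\tau(f)=nH$, it suffices to compute
\[
\sum_i\bigl(\nabla^f_{e_i}\nabla^f_{e_i}H-\nabla^f_{\nabla_{e_i}e_i}H\bigr)+\sum_i\widetilde R(H,e_i)e_i
\]
and set both components to zero. The sign conventions must be fixed so that the equations come out as in \eqref{eq:normal-biharmonic} and \eqref{eq:tangential-biharmonic}; in particular $\nabla^f_X H=\widetilde\nabla_XH$ under the standard identification.

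First I expand $\widetilde\nabla_{e_i}H=-A_He_i+\nabla^\perp_{e_i}H$ with the Weingarten formula, then differentiate once more using the Gauss and Weingarten formulas. The normal part of the rough Laplacian is
\[
\sum_i\bigl(\nabla^\perp_{e_i}\nabla^\perp_{e_i}H-\nabla^\perp_{\nabla_{e_i}e_i}H\bigr)-\sum_i h(e_i,A_He_i).
\]
The tangential part is
\[
-\sum_i\bigl((\nabla_{e_i}A_H)e_i+A_{\nabla^\perp_{e_i}H}e_i\bigr)-\sum_iA_{\nabla^\perp_{e_i}H}e_i.
\]

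Next comes the curvature term. For the Lagrangian case, with $H$ normal and $e_i$ tangent, I have $\langle e_i,H\rangle=0$ and $JH$ tangent, so $\langle Je_i,e_i\rangle=0$. The formula for $\widetilde R$ then gives
\[
\sum_i\widetilde R(H,e_i)e_i=\epsilon\Bigl(nH+\sum_i\bigl(-\langle JH,e_i\rangle Je_i+2\langle H,Je_i\rangle Je_i\bigr)\Bigr).
\]
Since $\sum_i\langle JH,e_i\rangle Je_i=J(JH)=-H$ and $\sum_i\langle H,Je_i\rangle Je_i=H$, this sum is $\epsilon(n+3)H$, which is purely normal. Up to the overall sign convention of $\tau_2$ relative to the paper's bi-Laplacian (the paper's normal equation has $-(n+3)\epsilon H$ together with $-\Delta^\perp$), this yields \eqref{eq:normal-biharmonic} after multiplying by $-1$.

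The main step is the tangential simplification: I need
\[
\sum_i(\nabla_{e_i}A_H)e_i=\tfrac n2\grad|H|^2+\sum_iA_{\nabla^\perp_{e_i}H}e_i.
\]
Once this holds, the tangential part becomes $-\bigl(2\sum_iA_{\nabla^\perp_{e_i}H}e_i+\tfrac n2\grad|H|^2\bigr)$, which is \eqref{eq:tangential-biharmonic}. To prove it, I pair with a tangent $X$:
\[
\langle(\nabla_{e_i}A_H)e_i,X\rangle=\langle(\overline\nabla_{e_i}h)(e_i,X),H\rangle+\langle h(e_i,X),\nabla^\perp_{e_i}H\rangle.
\]
Codazzi turns the first term into $\langle(\overline\nabla_Xh)(e_i,e_i),H\rangle$, and summing over $i$ gives $n\langle\nabla^\perp_XH,H\rangle=\tfrac n2X|H|^2$. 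The second term sums to $\langle\sum_iA_{\nabla^\perp_{e_i}H}e_i,X\rangle$.

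Codazzi holds here without a curvature correction, because the normal part of $\widetilde R(X,Y)Z$ vanishes for tangent $X,Y,Z$ in the Lagrangian setting, consistent with the stated Codazzi equation. I expect this to be the only delicate point, together with keeping track of signs.
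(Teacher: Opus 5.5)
Your argument is the standard derivation that the paper does not reproduce. The paper's proof only cites \cite{BiminimalHumbilical2012} for the normal equation and \cite{TangentialHumbilical2015} for the tangential one. You instead derive both directly from Jiang's formula, splitting $\widetilde\nabla_{e_i}\widetilde\nabla_{e_i}H-\widetilde\nabla_{\nabla_{e_i}e_i}H$ with the Gauss and Weingarten formulas. This gives a self-contained proof and shows where the constant $n+3$ comes from. The following steps are all correct:
\begin{itemize}
\item your evaluation $\sum_i\widetilde R(H,e_i)e_i=(n+3)\epsilon H$;
\item the normal part;
\item the remark that Codazzi needs no curvature correction for Lagrangian submanifolds;
\item the use of $\sum_i(\overline\nabla_Xh)(e_i,e_i)=n\nabla^\perp_XH$.
\end{itemize}
No sign convention is involved in the normal equation: it is your normal part multiplied by $-1$.

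There is, however, a double count in the tangential part, exactly where you expected trouble. You use the symbol $\nabla_{e_i}A_H$ with two meanings.
\begin{itemize}
\item Your displayed tangential part is right only if it means $(\nabla_{e_i}A)_H$, the derivative of $A$ as a tensor. The reason is that the full expression $\nabla_{e_i}(A_He_i)-A_H\nabla_{e_i}e_i$ splits as $(\nabla_{e_i}A)_He_i+A_{\nabla^\perp_{e_i}H}e_i$.
\item Your key identity and its pairing formula are right for the covariant derivative of the endomorphism $A_H$, namely $(\nabla_{e_i}A_H)Y=\nabla_{e_i}(A_HY)-A_H\nabla_{e_i}Y$. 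That is where the term $\langle h(e_i,X),\nabla^\perp_{e_i}H\rangle$ comes from.
\end{itemize}
With a single meaning throughout, your two displays combine to $-\tfrac n2\grad|H|^2-3\sum_iA_{\nabla^\perp_{e_i}H}e_i$, not to the claimed expression. The coefficient $2$ is the whole content of \eqref{eq:tangential-biharmonic}, so this matters.

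The repair is to write the tangential part as
\[
-\sum_i\bigl(\nabla_{e_i}(A_He_i)-A_H\nabla_{e_i}e_i\bigr)-\sum_iA_{\nabla^\perp_{e_i}H}e_i .
\]
Its first sum is $\sum_i(\nabla_{e_i}A_H)e_i$ in the endomorphism sense, and your identity then gives $-\bigl(2\sum_iA_{\nabla^\perp_{e_i}H}e_i+\tfrac n2\grad|H|^2\bigr)$. Equivalently, keep your display with $(\nabla_{e_i}A)_H$ and use $\sum_i(\nabla_{e_i}A)_He_i=\tfrac n2\grad|H|^2$. Its pairing with $X$ is $\sum_i\langle(\overline\nabla_{e_i}h)(e_i,X),H\rangle$, with no second term. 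With this correction the proof is complete.
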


\begin{proof}
The normal
and tangential equations follow from~\cite{BiminimalHumbilical2012}
and~\cite{TangentialHumbilical2015}, respectively.
\end{proof}

We put
$\omega_i^j(X)=\langle\nabla_Xe_i,e_j\rangle$.

\begin{lemma}\label{lem:Codazzi-relations}
Let $M^n$ be a Lagrangian $H$-umbilical submanifold in a complex
space form whose second fundamental form is given
by~\eqref{eq:H-umbilical}. Then the following relations hold for
$j>1$\textup{:}
\begin{align}
 &e_j\lambda=(2\mu-\lambda)\omega_j^1(e_1),
 \label{eq:Codazzi-lambda-j}\\
 &e_1\mu=(\lambda-2\mu)\omega_1^2(e_2)
 =\cdots=(\lambda-2\mu)\omega_1^n(e_n),
 \label{eq:Codazzi-mu-one}\\
 &e_j\mu=3\mu\omega_1^j(e_1).
 \label{eq:Codazzi-mu-j}
\end{align}
If $n\ge3$, the following additional relations hold\textup{:}
\begin{align}
 &\mu\omega_1^j(e_1)=0,
 \label{eq:Codazzi-p}\\
 &\mu\omega_1^2(e_2)=\cdots=\mu\omega_1^n(e_n),
 \label{eq:Codazzi-diagonal-mu}\\
 &\mu\omega_1^i(e_j)=0,\qquad i,j>1,\quad i\ne j,
 \label{eq:Codazzi-offdiag-mu}\\
 &(\lambda-2\mu)\omega_1^i(e_j)=0,
 \qquad i,j>1,\quad i\ne j.
 \label{eq:Codazzi-offdiag-lambda}
\end{align}
\end{lemma}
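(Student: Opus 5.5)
The plan is to write out the Codazzi equation $(\overline\nabla_Xh)(Y,Z)=(\overline\nabla_Yh)(X,Z)$ in the frame of~\eqref{eq:H-umbilical} for suitable triples of frame vectors, and compare components along $Je_1,\ldots,Je_n$. Since $M^n$ is Lagrangian, $\nabla^\perp_X JY=J\nabla_XY$. Hence
\[
 \langle(\overline\nabla_Xh)(Y,Z),Je_k\rangle
 =X\langle h(Y,Z),Je_k\rangle-\langle h(\nabla_XY,Z),Je_k\rangle
 -\langle h(Y,\nabla_XZ),Je_k\rangle-\langle h(Y,Z),J\nabla_Xe_k\rangle .
\]
The cubic form $C(Y,Z,W)=\langle h(Y,Z),JW\rangle$ is totally symmetric. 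In the frame its only nonzero components are
\[
 C_{111}=\lambda,\qquad C_{1jj}=\mu\quad(j>1),
\]
together with their permutations. Thus every component of $\overline\nabla C$ is a short expression in $e_i\lambda$, $e_i\mu$, $\lambda$, $\mu$ and the connection forms $\omega_i^j$. I will use the skew-symmetry $\omega_i^j=-\omega_j^i$ throughout.

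The relations valid for all $n\ge2$ come from the triples that use only $e_1$ and one $e_j$, $j>1$.
\begin{itemize}
 \item Taking $X=e_j$, $Y=e_1$, $Z=e_1$ and the $Je_1$-component gives $e_j\lambda$ on one side. On the other side the $e_1$-derivative term vanishes, since $C(e_j,e_1,e_1)=0$. The connection terms give $(2\mu-\lambda)\omega_j^1(e_1)$, which is~\eqref{eq:Codazzi-lambda-j}.
 \item The same triple with the $Je_j$-component, i.e.\ the identity $(\nabla_{e_1}C)(e_j,e_j,e_1)=(\nabla_{e_j}C)(e_1,e_j,e_1)$ read appropriately, produces $e_1\mu=(\lambda-2\mu)\omega_1^j(e_j)$. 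Letting $j$ vary gives the chain~\eqref{eq:Codazzi-mu-one}.
 \item Comparing $(\nabla_{e_j}C)(e_1,e_1,e_1)$-type terms with $(\nabla_{e_1}C)(e_j,\cdot,\cdot)$, or using the triple $(e_j,e_j,e_j)$ against $(e_1,e_j,e_j)$ in the appropriate component, gives $e_j\mu=3\mu\,\omega_1^j(e_1)$. The factor $3$ arises because $\mu$ appears in three slots of $C(e_1,e_j,e_j)$ when differentiated against $e_j$.
\end{itemize}
In each case I will just collect terms, being careful with which $\omega$'s survive given that $C_{ijk}=0$ unless exactly zero or two of the indices exceed $1$.

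For $n\ge3$, I will use triples involving two distinct indices $i,j>1$ with $i\ne j$, together with $e_1$. The point is that then several components of $C$ vanish identically, so no derivative terms appear and the Codazzi identity becomes purely algebraic.
\begin{itemize}
 \item $(\nabla_{e_i}C)(e_j,e_j,e_1)=(\nabla_{e_j}C)(e_i,e_j,e_1)$ yields $e_i\mu$ in terms of $\mu\,\omega_1^i(e_1)$ with a coefficient different from $3$, namely $\mu\,\omega$ terms only. Combined with~\eqref{eq:Codazzi-mu-j}, this forces $\mu\,\omega_1^j(e_1)=0$, which is~\eqref{eq:Codazzi-p}.
 \item The component of $(\nabla_{e_i}C)(e_j,e_j,e_i)=(\nabla_{e_j}C)(e_i,e_j,e_i)$ gives $\mu\,\omega_1^i(e_i)=\mu\,\omega_1^j(e_j)$, which is~\eqref{eq:Codazzi-diagonal-mu}.
 \item $(\nabla_{e_i}C)(e_j,e_j,e_j)=(\nabla_{e_j}C)(e_i,e_j,e_j)$ gives~\eqref{eq:Codazzi-offdiag-mu}.
 \item $(\nabla_{e_i}C)(e_1,e_1,e_j)=(\nabla_{e_1}C)(e_i,e_1,e_j)$ gives $(\lambda-2\mu)\omega_1^i(e_j)$ up to terms already shown to vanish, hence~\eqref{eq:Codazzi-offdiag-lambda}.
\end{itemize}

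The main obstacle is purely bookkeeping: choosing, for each target relation, the right triple and component so that exactly the desired terms survive, and tracking the signs of the $\omega$'s. I would organize this by first tabulating $(\nabla_{e_a}C)(e_b,e_c,e_d)$ for all index patterns, namely $a,b,c,d\in\{1,j\}$ or $\{1,i,j\}$, and then reading off the relations by symmetrizing in $a,b$.
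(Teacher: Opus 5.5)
Your approach is correct and is essentially the paper's, which does no computation of its own and simply cites the Codazzi component identities (5.2)--(5.8) of \cite{TangentialHumbilical2015}; the triples you choose do yield the stated relations, with \eqref{eq:Codazzi-mu-j} coming from $(\nabla_{e_j}C)(e_1,e_j,e_j)=(\nabla_{e_1}C)(e_j,e_j,e_j)$, your second alternative. The factor $3$ there comes from the three slots of $C(e_j,e_j,e_j)$, each rotated by $\omega_j^1(e_1)$, and your identity for \eqref{eq:Codazzi-p} actually gives $e_i\mu=0$ outright. The only step needing an extra line is \eqref{eq:Codazzi-offdiag-mu}: $(\nabla_{e_i}C)(e_j,e_j,e_j)=(\nabla_{e_j}C)(e_i,e_j,e_j)$ by itself gives only $3\mu\,\omega_1^j(e_i)=\mu\,\omega_1^i(e_j)$, so you must combine it with the same identity with $i$ and $j$ interchanged to get $8\mu\,\omega_1^i(e_j)=0$.
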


\begin{proof}
These relations follow from the Codazzi equation; see
\cite[(5.2)--(5.8)]{TangentialHumbilical2015}.
\end{proof}

Equations~\eqref{eq:normal-biharmonic}
and~\eqref{eq:tangential-biharmonic} have the following components.

\begin{lemma}\label{lem:biharmonic-components}
Let $f\colon M^n\to\widetilde M^n(4\epsilon)$ be a Lagrangian
$H$-umbilical immersion whose second fundamental form is given
by~\eqref{eq:H-umbilical}. Then $f$ is biharmonic if and only if the
following equations hold\textup{:}
\begin{align}
&\begin{aligned}
&-\sum_{i=1}^ne_i e_i a
 +a\sum_{i,j=1}^n(\omega_1^j(e_i))^2\\
&\quad+\sum_{i,j=1}^n(e_j a)\omega_i^j(e_i)
 +a\{\lambda^2+(n-1)\mu^2-(n+3)\epsilon\}=0,
\end{aligned}
\label{eq:normal-component-one}\\
&\begin{aligned}
&-2\sum_{i=1}^n(e_i a)\omega_1^k(e_i)
 -a\sum_{i=1}^ne_i(\omega_1^k(e_i))\\
&\quad-a\sum_{i,j=1}^n\omega_1^j(e_i)\omega_j^k(e_i)
 +a\sum_{i,j=1}^n\omega_i^j(e_i)\omega_1^k(e_j)=0,
 \qquad k=2,\ldots,n,
\end{aligned}
\label{eq:normal-component-k}\displaybreak[4]\\
&\{3\lambda+(n-1)\mu\}e_1 a
 +2a\mu\sum_{i=2}^n\omega_1^i(e_i)=0,
\label{eq:tangent-component-one}\\
&\{\lambda+(n+1)\mu\}e_j a
 +2a\mu\omega_1^j(e_1)=0,
 \qquad j=2,\ldots,n.
\label{eq:tangent-component-j}
\end{align}
Equations~\eqref{eq:normal-component-one}
and~\eqref{eq:normal-component-k} are the $Je_1$- and
$Je_k$-components of the normal equation, respectively,
while~\eqref{eq:tangent-component-one}
and~\eqref{eq:tangent-component-j} are the $e_1$- and
$e_j$-components of the tangential equation.
\end{lemma}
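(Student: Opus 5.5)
The plan is to substitute $H=aJe_1$ directly into the two equations of Proposition~\ref{prop:biharmonic-equations}. Everything reduces to computing $\nabla^\perp$ of $Je_i$, the shape operators $A_{Je_k}$, and the term $\sum_i h(e_i,A_He_i)$ from \eqref{eq:H-umbilical}. Since $J$ is parallel on a Kähler manifold and the immersion is Lagrangian, we have $\nabla^\perp_X(JY)=J\nabla_XY$ and $A_{JX}Y=-Jh(X,Y)$ is symmetric in $X,Y$. Hence
\[
\nabla^\perp_X Je_1=\sum_{j}\omega_1^j(X)Je_j .
\]
Reading off \eqref{eq:H-umbilical}, $A_{Je_1}$ is diagonal with entries $\lambda,\mu,\ldots,\mu$. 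For $k\ge2$, $A_{Je_k}$ satisfies $A_{Je_k}e_1=\mu e_k$, $A_{Je_k}e_k=\mu e_1$, and $A_{Je_k}e_j=0$ for the other $j$.

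\textbf{Tangential equation \eqref{eq:tangential-biharmonic}.} First,
\[
\nabla^\perp_{e_i}H=(e_ia)Je_1+a\sum_j\omega_1^j(e_i)Je_j .
\]
Then
\[
\sum_iA_{\nabla^\perp_{e_i}H}e_i=\sum_i(e_ia)A_{Je_1}e_i+a\sum_{i,j}\omega_1^j(e_i)A_{Je_j}e_i .
\]
The first sum equals $\lambda(e_1a)e_1+\mu\sum_{j\ge2}(e_ja)e_j$. In the second sum only $j\ge2$ contributes, because $\omega_1^1=0$. The pairs $(i,j)=(j,j)$ contribute $\mu\,\omega_1^j(e_j)e_1$, the pairs $(i,j)=(1,j)$ contribute $\mu\,\omega_1^j(e_1)e_j$, and all other pairs vanish. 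Using $\grad|H|^2=2a\,\grad a$, the $e_1$-component is
\[
2\lambda e_1a+2a\mu\sum_{i\ge2}\omega_1^i(e_i)+n\,e_1a ,
\]
and the $e_j$-component is
\[
2\mu e_ja+2a\mu\,\omega_1^j(e_1)+n\,e_ja .
\]
Comparing with \eqref{eq:tangent-component-one}--\eqref{eq:tangent-component-j}, the coefficients should be $3\lambda+(n-1)\mu$ and $\lambda+(n+1)\mu$. These agree with mine exactly when $n\,e_1a$ is replaced by $\{\lambda+(n-1)\mu\}e_1a$ and $n\,e_ja$ by $\{\lambda+(n-1)\mu\}e_ja$. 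So the normalization of $\frac n2\grad|H|^2$ needs care: the stated forms come out if one writes the term as $\frac{n}{2}\grad a^2$ with one factor $na=\lambda+(n-1)\mu$ absorbed, i.e.\ $n a\,\grad a$. I would recheck the constant in \eqref{eq:tangential-biharmonic} against \cite{TangentialHumbilical2015} and adopt the convention that yields \eqref{eq:tangent-component-one}--\eqref{eq:tangent-component-j}; this bookkeeping is the likely snag.

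\textbf{Normal equation \eqref{eq:normal-biharmonic}.} Differentiate $\nabla^\perp_{e_i}H$ once more:
\[
\nabla^\perp_{e_i}\nabla^\perp_{e_i}H=\sum_j\Bigl[e_i\bigl(e_ia\,\delta_{1j}+a\,\omega_1^j(e_i)\bigr)\Bigr]Je_j+\sum_{j,k}\bigl(e_ia\,\delta_{1j}+a\,\omega_1^j(e_i)\bigr)\omega_j^k(e_i)Je_k .
\]
Subtract $\nabla^\perp_{\nabla_{e_i}e_i}H$, which equals $\sum_j\omega_i^j(e_i)\bigl[(e_ja)Je_1+a\sum_k\omega_1^k(e_j)Je_k\bigr]$.

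\emph{$Je_1$-component.} The relevant terms are $e_ie_ia$, the term $a\sum_{i,j}\omega_1^j(e_i)\omega_j^1(e_i)=-a\sum(\omega_1^j(e_i))^2$, and the term $-\sum(e_ja)\omega_i^j(e_i)$. The contributions $2(e_ia)\omega_1^1(e_i)$ and $a\,\omega_1^1$ vanish. Finally, $\sum_ih(e_i,A_He_i)=a\{\lambda^2+(n-1)\mu^2\}Je_1$ plus possible $Je_k$ parts, which vanish by the structure of \eqref{eq:H-umbilical}. Together these give \eqref{eq:normal-component-one} after the overall sign.

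\emph{$Je_k$-components, $k\ge2$.} These collect $2(e_ia)\omega_1^k(e_i)$, $a\,e_i(\omega_1^k(e_i))$, $a\sum_j\omega_1^j(e_i)\omega_j^k(e_i)$ and $-a\sum\omega_i^j(e_i)\omega_1^k(e_j)$, which produce \eqref{eq:normal-component-k}.

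I expect no conceptual obstacle. The main risk is sign and index bookkeeping, which I would control via $\omega_i^j=-\omega_j^i$ and by checking the curvature-term constant.
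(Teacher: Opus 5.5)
Your route differs from the paper's. The paper does no computation here. It imports the $Je_1$- and $Je_k$-components from \cite[Lemma~8]{BiminimalHumbilical2012} and the $e_1$- and $e_j$-components from \cite[(5.13)--(5.14)]{TangentialHumbilical2015}. You instead derive all four equations directly from Proposition~\ref{prop:biharmonic-equations}, using $\nabla^\perp_X JY=J\nabla_XY$ and the explicit shape operators $A_{Je_1}$ and $A_{Je_k}$. Your normal-part bookkeeping is correct term by term, including the observation that $\sum_i h(e_i,A_He_i)=a\{\lambda^2+(n-1)\mu^2\}Je_1$ has no $Je_k$-part. Your route gives a self-contained verification at the cost of length; the paper's gives brevity but relies on two external sources.

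The tangential part contains an arithmetic slip, and your diagnosis of it is wrong. Since $|H|^2=a^2$, you have $\frac n2\grad|H|^2=\frac n2\cdot 2a\,\grad a=na\,\grad a$. This term therefore contributes $na\,e_1a$ and $na\,e_ja$, not $n\,e_1a$ and $n\,e_ja$. You dropped the factor $a$ right after writing $\grad|H|^2=2a\,\grad a$ yourself. With $na=\lambda+(n-1)\mu$, the coefficients become $2\lambda+\lambda+(n-1)\mu=3\lambda+(n-1)\mu$ and $2\mu+\lambda+(n-1)\mu=\lambda+(n+1)\mu$. These are exactly \eqref{eq:tangent-component-one} and \eqref{eq:tangent-component-j}.

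So there is no normalization issue in \eqref{eq:tangential-biharmonic} to recheck against \cite{TangentialHumbilical2015}. You should delete the step of ``adopting the convention that yields the stated forms'': as written it is not a proof step, only fitting to the target answer.

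Finally, for the ``if and only if'', state explicitly that $\{e_i\}$ and $\{Je_i\}$ are orthonormal frames of $TM$ and $T^\perp M$. Then the two vector equations of Proposition~\ref{prop:biharmonic-equations} are equivalent to the vanishing of all their components.
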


\begin{proof}
The first two equations are established
in~\cite[Lemma~8]{BiminimalHumbilical2012}, and the last two
follow from~\cite[(5.13)--(5.14)]{TangentialHumbilical2015}.
\end{proof}

\section{Constancy of the mean curvature}\label{sec:cmc}

\begin{proposition}\label{prop:cmc}
Let $n\ge2$ and let
$f\colon M^n\to\widetilde M^n(4\epsilon)$ be a biharmonic Lagrangian
$H$-umbilical immersion. Then $|H|$ is constant on $M^n$.
\end{proposition}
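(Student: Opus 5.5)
We argue by contradiction. Suppose $|H|=|a|$ is not constant. Then there is a nonempty open set $U$ on which $a\neq0$ and $\grad a\neq0$, and we work on $U$ throughout. Our tools are the tangential equations \eqref{eq:tangent-component-one}--\eqref{eq:tangent-component-j}, the Codazzi relations of Lemma~\ref{lem:Codazzi-relations}, and the normal equations \eqref{eq:normal-component-one}--\eqref{eq:normal-component-k}. The aim is to show that $\grad a$ must vanish on $U$.

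\emph{Step 1: the case $n\ge3$ and $\mu=0$ on an open subset.} In this case $a=\lambda/n$, so $\lambda\neq0$. Equation \eqref{eq:tangent-component-one} reduces to $3\lambda\, e_1a=0$, and \eqref{eq:tangent-component-j} reduces to $\lambda\, e_ja=0$. Hence $\grad a=0$ there, which is a contradiction. So we may assume $\mu\neq0$ on a (shrunken) $U$.

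\emph{Step 2: the case $n\ge3$ and $\mu\neq0$.} By \eqref{eq:Codazzi-p} we have $\omega_1^j(e_1)=0$, and by \eqref{eq:Codazzi-offdiag-mu} we have $\omega_1^i(e_j)=0$ for $i\neq j$. Then \eqref{eq:Codazzi-mu-j} gives $e_j\mu=0$ and \eqref{eq:Codazzi-lambda-j} gives $e_j\lambda=0$, so $e_ja=0$ for all $j>1$. Everything therefore depends only on the parameter along the $e_1$-curves, which are geodesics. Put $\omega=\omega_1^2(e_2)=\cdots=\omega_1^n(e_n)$ and write $'$ for $e_1$. The local geometry is then that of a warped product $I\times_\rho N$ with $\omega=\rho'/\rho$. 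The Gauss equation \eqref{eq:Gauss-H-umbilical} becomes the Riccati equation
\[
\omega'+\omega^2=-(\epsilon+\lambda\mu-\mu^2).
\]
Together with this we have:
\begin{itemize}
\item the Codazzi relation $\mu'=(\lambda-2\mu)\omega$;
\item the tangential relation $\{3\lambda+(n-1)\mu\}a'+2(n-1)a\mu\omega=0$;
\item the normal equation \eqref{eq:normal-component-one}, which becomes the second-order ODE
\[
-a''-(n-1)\omega a'+a\{(n-1)\omega^2+\lambda^2+(n-1)\mu^2-(n+3)\epsilon\}=0 .
\]
\end{itemize}
Equation \eqref{eq:normal-component-k} should hold automatically in this setting.

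Next we eliminate variables. Using $\lambda=na-(n-1)\mu$, the tangential relation expresses $a'$ in terms of $(a,\mu,\omega)$. The Codazzi relation gives $\mu'$, and the Riccati equation gives $\omega'$. So $(a,\mu,\omega)$ satisfies a closed first-order system. Substituting this system into the normal ODE turns it into a polynomial identity $P(a,\mu,\omega)=0$. Differentiating $P=0$ along $e_1$ and reducing with the system gives a second polynomial relation $Q=0$, and possibly a third one after another differentiation. Taking resultants in $\omega$ should force $a'=0$, or force a polynomial relation in $(a,\mu)$ alone that is incompatible with $a'\neq0$.

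We must also handle the degenerate subcase $3\lambda+(n-1)\mu=0$ separately. There the tangential relation forces $\omega=0$ (since $a\mu\neq0$). Then $\mu'=0$, so $\mu$ is constant, and $\lambda=-(n-1)\mu/3$ is constant as well, giving $a'=0$ directly.

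\emph{Step 3: the case $n=2$.} The extra relations \eqref{eq:Codazzi-p}--\eqref{eq:Codazzi-offdiag-lambda} are no longer available, so $e_2a$ need not vanish a priori. We instead use \eqref{eq:tangent-component-j} together with \eqref{eq:Codazzi-lambda-j} and \eqref{eq:Codazzi-mu-j}. Substituting $e_2\lambda$ and $e_2\mu$ into $e_2a=\tfrac12(e_2\lambda+e_2\mu)$ expresses $e_2a$ as a multiple of $\omega_1^2(e_1)$, namely $\tfrac12(\lambda+\mu)\,\omega_1^2(e_1)$. Comparing this with \eqref{eq:tangent-component-j} yields
\[
\bigl\{(\lambda+3\mu)(\lambda+\mu)+4a\mu\bigr\}\,\omega_1^2(e_1)=0 .
\]
Since $2a=\lambda+\mu$, the bracket equals $(\lambda+\mu)(\lambda+5\mu)$. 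Hence either $\omega_1^2(e_1)=0$, or $\lambda=-5\mu$ on an open set. (The alternative $\lambda+\mu=0$ would give $a=0$, which is excluded on $U$.) In the first case we return to the one-variable analysis of Step 2. In the second case $a=-2\mu$, and \eqref{eq:tangent-component-one} together with \eqref{eq:Codazzi-mu-one} gives a direct algebraic contradiction or forces $a'=0$.

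\emph{Main obstacle.} I expect the hardest part to be the elimination in Step 2, namely showing that the overdetermined polynomial system coming from the normal ODE has no solutions with $a'\neq0$. I would first try to find a first integral of the system; the warped-product structure suggests one via $\rho$. Failing that, I would carry out the resultant computation directly.
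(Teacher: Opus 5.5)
\textbf{Verdict: there is a genuine gap in Step~2.} Your reductions match the paper's. You kill $\mu=0$ with the tangential equations, obtain $\omega_1^j(e_1)=0$ and $e_ja=e_j\lambda=e_j\mu=0$, dispose of the subcase $3\lambda+(n-1)\mu=0$, and arrive at the same ODE system along $e_1$.

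Two small repairs are needed. First, Step~1 works verbatim for $n=2$, so it should not be restricted to $n\ge3$. Second, in the $n=2$ branch $\lambda=-5\mu$, your relations do give $e_1a=0$, but you also need $e_2a=0$. This follows from $e_2\mu=3\mu\,\omega_1^2(e_1)$ together with $\mu=-a/2$ and $e_2a=a\,\omega_1^2(e_1)$: these force $\omega_1^2(e_1)=0$.

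The gap is that the elimination in Step~2, which carries the substance of the proposition, is only announced. Moreover, the endpoint you describe would not close the argument. A single polynomial relation between $a$ and $\mu$ is not incompatible with $a'\neq0$, because $(a,\mu)$ can move along the curve it defines. You must keep differentiating until the relations become independent. You must then prove that the resulting polynomials are not identically zero for every $n\ge2$, and treat separately the values where independence fails.

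The paper does this by exploiting homogeneity. With $t=\mu/a$ and $w=a'/a$, both $k$ and $t'$ are $w$ times explicit functions of $t$ (see \eqref{eq:higher-k} and \eqref{eq:higher-tprime}). The elimination then runs in three stages:
\begin{enumerate}
\item Eliminating $w'$ between the Gauss and normal equations gives $C_n(t)w^2+D_n(t)a^2+E_n(t)\epsilon=0$.
\item Differentiating this and eliminating $w^2$ gives $\epsilon A_n(t)+a^2B_n(t)=0$. This is exactly your ``relation in $(a,\mu)$ alone'', and it is not yet a contradiction.
\item For $\epsilon\neq0$, one differentiates once more and eliminates $a^2$ to get $Q_n(t)=0$.
\end{enumerate}
One then checks, via leading coefficients, that $B_n$ and $tQ_n$ are nonzero polynomials for all $n\ge2$, so $t$ is constant. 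Since $w\neq0$, \eqref{eq:higher-tprime} then gives $t\in\{n/(n-1),\,3n/(2(n+2))\}$, and hence $a^2$ is constant. The exception is when $B_n(t)=0$ at that value, which happens exactly for $(n,t)=(3,3/2),(4,1),(5,5/4)$.

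These exceptional pairs must be excluded by returning to the two expressions for $w'$. For example, for $(4,1)$, $A_4(1)\neq0$ forces $\epsilon=0$, and then $w'=w^2$ and $w'=5w^2+4a^2$ are incompatible. Your proposal contains none of the scaling variables, the non-degeneracy checks uniform in $n$, or the treatment of the exceptional dimensions. As it stands, Step~2 is a plan rather than a proof.
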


\begin{proof}

Suppose that $\grad |H|\ne0$ at some point of $\{H\ne0\}$. Choose a
neighborhood $U$ of this point and an adapted frame field such that
$e_1=-JH/|H|$. Then $a=|H|>0$ and $\grad a\ne0$ on $U$.
If $\mu$ vanishes on a nonempty open subset of $U$, then
$\lambda=na$, and~\eqref{eq:tangent-component-one}
and~\eqref{eq:tangent-component-j} force
$e_i a=0$ for every $i$, contrary to $\grad a\ne0$. Hence, after
restricting $U$, we may assume that $\mu\ne0$.

We define
\begin{equation*}
 p_j=\langle\nabla_{e_1}e_1,e_j\rangle,\qquad
 k_j=\langle\nabla_{e_j}e_1,e_j\rangle,\qquad j>1.
\end{equation*}
We claim that
\begin{equation}\label{eq:higher-transverse}
 p_j=0,\qquad e_j\lambda=e_j\mu=e_j a=0,\qquad
 k_2=\cdots=k_n=:k.
\end{equation}
For $n\ge3$, equations~\eqref{eq:Codazzi-lambda-j},
\eqref{eq:Codazzi-mu-j},~\eqref{eq:Codazzi-p}, and
\eqref{eq:Codazzi-diagonal-mu}, together with $\mu\ne0$, establish
\eqref{eq:higher-transverse}. Suppose that $n=2$.
Write $p=p_2$ and $k=k_2$. Equations~\eqref{eq:Codazzi-lambda-j}
and~\eqref{eq:Codazzi-mu-j} give
\begin{equation*}
 e_2\lambda=(\lambda-2\mu)p,\qquad
 e_2\mu=3\mu p,
\end{equation*}
and therefore $e_2a=ap$. Equation~\eqref{eq:tangent-component-j},
with $n=j=2$, then yields
\begin{equation*}
 (a+2\mu)e_2a=0.
\end{equation*}
If $e_2a\ne0$ on a nonempty open set, then
$\mu=-a/2$ and $\lambda=5a/2$ there.
Equation~\eqref{eq:Codazzi-lambda-j}, together with $p=e_2a/a$, implies
\begin{equation*}
 \frac52e_2a=\frac72e_2a,
\end{equation*}
a contradiction. Hence $e_2a=0$, and consequently $p=0$.
It follows from \eqref{eq:Codazzi-lambda-j} and~\eqref{eq:Codazzi-mu-j}
that $e_2\lambda=e_2\mu=0$. Thus
\eqref{eq:higher-transverse} holds for every $n\ge2$.

Henceforth, a prime denotes differentiation in the
$e_1$-direction. Equations~\eqref{eq:tangent-component-one}
and~\eqref{eq:Codazzi-mu-one} reduce to
\begin{gather}
 (2\lambda+na)a'+2(n-1)a\mu k=0,
 \label{eq:higher-tangential}\\
 \mu'=(\lambda-2\mu)k.
 \label{eq:higher-muprime}
\end{gather}

Neither $\lambda-2\mu$ nor $2\lambda+na$ vanishes identically on a
nonempty open subset of $U$. Indeed, if $\lambda-2\mu=0$, then
from~\eqref{eq:higher-muprime}, we find $\mu'=0$. Since
$a=(n+1)\mu/n$, necessarily $a'=0$. If
$2\lambda+na=0$, then from~\eqref{eq:higher-tangential}, we have $k=0$, and
equation~\eqref{eq:higher-muprime} yields $\mu'=0$. Since
$a=2(n-1)\mu/(3n)$, we again obtain $a'=0$. Thus the sets on which
these two functions are nonzero are open and dense in $U$. After
restricting $U$, we may assume that
\begin{equation}\label{eq:higher-nonzero-factors}
 \mu(\lambda-2\mu)(2\lambda+na)\ne0.
\end{equation}

Under~\eqref{eq:higher-nonzero-factors}, we introduce
\begin{equation*}
 t=\frac{\mu}{a},\qquad w=\frac{a'}a.
\end{equation*}
Since $\lambda=na-(n-1)\mu=a\{n-(n-1)t\}$,~\eqref{eq:higher-tangential} becomes
\begin{align}
 k&=-\frac{3n-2(n-1)t}{2(n-1)t}w.
 \label{eq:higher-k}
\end{align}
It follows from $\mu=at$ and $\lambda=na-(n-1)\mu$ that
\[
 \mu'=a(t'+tw),\qquad
 \lambda-2\mu=a\{n-(n+1)t\}.
\]
Substituting these identities and~\eqref{eq:higher-k}
into~\eqref{eq:higher-muprime}, we obtain
\begin{align}
 t'&=-\frac{(nt-n-t)(2nt-3n+4t)}
 {2(n-1)t}w.
 \label{eq:higher-tprime}
\end{align}
For $n=2$, the identity $\nabla_{e_2}e_1=ke_2$ is automatic. For
$n\ge3$, since $\mu\ne0$, equation~\eqref{eq:Codazzi-offdiag-mu}
implies $\nabla_{e_j}e_1=ke_j$. Under~\eqref{eq:higher-nonzero-factors},
the same conclusion also follows from
equation~\eqref{eq:Codazzi-offdiag-lambda}. Moreover, from
\eqref{eq:higher-transverse}, we have
$\nabla_{e_1}e_1=0$. By a direct calculation, we find
\begin{equation}\label{eq:higher-intrinsic-curvature}
 \langle R(e_1,e_j)e_j,e_1\rangle=-e_1k-k^2=-k'-k^2.
\end{equation}
Combining~\eqref{eq:higher-intrinsic-curvature}
with~\eqref{eq:Gauss-H-umbilical} yields
\begin{align}
 -k'-k^2&=\epsilon+\mu(\lambda-\mu).
 \label{eq:higher-Gauss}
\end{align}
By~\eqref{eq:higher-transverse}, equation~\eqref{eq:normal-component-one} becomes
\begin{equation}\label{eq:higher-normal}
\begin{aligned}
 &-a''-(n-1)ka'+(n-1)ak^2\\
 &\quad+a\{\lambda^2+(n-1)\mu^2-(n+3)\epsilon\}=0.
\end{aligned}
\end{equation}
Differentiating~\eqref{eq:higher-k} and using
\eqref{eq:higher-tprime}, we obtain
\[
 k'=-\frac{3n-2(n-1)t}{2(n-1)t}w'
 -\frac{3n(nt-n-t)(2nt-3n+4t)}{4(n-1)^2t^3}w^2.
\]
Substituting this expression and~\eqref{eq:higher-k}
into~\eqref{eq:higher-Gauss} gives
\begin{equation}\label{eq:higher-Gauss-w}
\begin{aligned}
&\frac{3n-2(n-1)t}{2(n-1)t}w'\\
&+\left\{
\frac{3n(nt-n-t)(2nt-3n+4t)}{4(n-1)^2t^3}
-\frac{\{3n-2(n-1)t\}^2}{4(n-1)^2t^2}
\right\}w^2\\
&=\epsilon+na^2t(1-t).
\end{aligned}
\end{equation}
Dividing~\eqref{eq:higher-normal} by $a$ and using
$a'/a=w$, $a''/a=w'+w^2$,
$\lambda=a\{n-(n-1)t\}$, and $\mu=at$, we obtain
\begin{align*}
&-w'-w^2-(n-1)kw+(n-1)k^2\\
&\quad+a^2\left[\{n-(n-1)t\}^2+(n-1)t^2\right]
 -(n+3)\epsilon=0.
\end{align*}
Substituting~\eqref{eq:higher-k} implies
\begin{equation}\label{eq:higher-normal-w}
\begin{aligned}
w'={}&-\frac{6n^2t-9n^2+4nt^2-6nt-4t^2}
 {4t^2(n-1)}w^2\\
&+na^2(nt^2-2nt+n-t^2+2t)-(n+3)\epsilon.
\end{aligned}
\end{equation}
After substituting~\eqref{eq:higher-normal-w}
into~\eqref{eq:higher-Gauss-w} and collecting coefficients,
we have
\begin{equation}\label{eq:higher-linear-one}
\begin{aligned}
&
\frac{3n(8n^2t^2-22n^2t+15n^2+4nt-8t^2)}
 {8t^3(n-1)^2}w^2\\
&-\frac{n(2n^2t^3-7n^2t^2+8n^2t-3n^2-6nt^3
+13nt^2-8nt+4t^3-6t^2)}
 {2t(n-1)}a^2\\
&+\frac{2n^2t-3n^2+2nt-9n-4t}
 {2t(n-1)}\epsilon=0.
\end{aligned}
\end{equation}
Let $C_n(t)$, $D_n(t)$, and $E_n(t)$ be the coefficients of
$w^2$, $a^2$, and $\epsilon$, respectively,
in~\eqref{eq:higher-linear-one}.
Differentiating~\eqref{eq:higher-linear-one}, dividing by $w\ne0$,
and using~\eqref{eq:higher-tprime} and~\eqref{eq:higher-normal-w} yields
\begin{equation}\label{eq:higher-linear-two}
\begin{aligned}
&\left\{\frac{t'}w\frac{dC_n}{dt}
-\frac{6n^2t-9n^2+4nt^2-6nt-4t^2}
 {2t^2(n-1)}C_n\right\}w^2\\
&+\left\{\frac{t'}w\frac{dD_n}{dt}+2D_n
+2n(nt^2-2nt+n-t^2+2t)C_n\right\}a^2\\
&+\left\{\frac{t'}w\frac{dE_n}{dt}
-2(n+3)C_n\right\}\epsilon=0,
\end{aligned}
\end{equation}
where $t'/w$ is given by~\eqref{eq:higher-tprime}. Let
$\widehat C_n(t)$, $\widehat D_n(t)$, and $\widehat E_n(t)$ denote
the coefficients of $w^2$, $a^2$, and $\epsilon$ in~\eqref{eq:higher-linear-two}. Eliminating $w^2$, we find
\begin{equation}\label{eq:higher-eliminate-w}
 \{C_n\widehat D_n-\widehat C_nD_n\}a^2
 +\{C_n\widehat E_n-\widehat C_nE_n\}\epsilon=0.
\end{equation}
From a direct calculation of the coefficients in
\eqref{eq:higher-eliminate-w}, we have
\begin{equation}\label{eq:higher-elimination}
\begin{aligned}
 C_n\widehat D_n-\widehat C_nD_n
 &=\frac{3n^2}{16t^5(n-1)^7}B_n(t),\\
 C_n\widehat E_n-\widehat C_nE_n
 &=\frac{3n^2}{16t^5(n-1)^7}A_n(t).
\end{aligned}
\end{equation}
Here the polynomials $A_n(t)$ and $B_n(t)$ are defined by
\begin{align*}
A_n(t)={}&-16(n-1)^6(n+1)(n+2)t^4\\
&+8(n-1)^5(n+1)(10n^2-17n-68)t^3\\
&-4n(n-1)^4(37n^3-165n^2-438n-82)t^2\\
&+6n^2(n-1)^3(5n+7)(4n^2-39n+17)t\\
&-18n^3(n-1)^3(2n^2-23n-27),
\end{align*}
and
\begin{align*}
B_n(t)={}&16(n-2)(n-1)^6(n+1)(3n+2)t^6\\
&-8n(n-1)^5(n+1)(45n^2-119n+38)t^5\\
&+4n(n-1)^4(280n^4-769n^3+117n^2+380n-116)t^4\\
&-2n^2(n-1)^4(925n^3-2448n^2+525n+422)t^3\\
&+n^3(n-1)^4(1711n^2-4031n+466)t^2\\
&-3n^4(n-1)^3(280n^2-809n+421)t\\
&+9n^5(n-1)^3(19n-43).
\end{align*}
Since $n\ge2$ and $t\ne0$,
\eqref{eq:higher-eliminate-w} and~\eqref{eq:higher-elimination} imply
\begin{equation}\label{eq:higher-AB}
 \epsilon A_n(t)+a^2B_n(t)=0.
\end{equation}

\noindent\textbf{Case 1\textnormal{:} $\epsilon=0$.}
Equation~\eqref{eq:higher-AB} reduces to $B_n(t)=0$.
For $n\ge3$, the leading coefficient of $B_n$ is
\[
 16(n-2)(n-1)^6(n+1)(3n+2),
\]
which is nonzero. For $n=2$, the polynomial $B_2$ has
degree five and leading coefficient $960$. Thus $B_n$ is a nonzero
polynomial for every $n\ge2$, and $t$ is constant.

\medskip
\noindent\textbf{Case 2\textnormal{:} $\epsilon\ne0$.}
Differentiation of~\eqref{eq:higher-AB} gives
\begin{equation}\label{eq:higher-AB-derivative}
 \epsilon A_n'(t)t'
 +a^2\{2wB_n(t)+B_n'(t)t'\}=0.
\end{equation}
Substituting~\eqref{eq:higher-tprime}
into~\eqref{eq:higher-AB-derivative} and
then dividing by $w\ne0$, we obtain
\begin{equation}\label{eq:higher-AB-substituted}
\begin{aligned}
&-\epsilon A_n'(t)
 \frac{(nt-n-t)(2nt-3n+4t)}{2(n-1)t}\\
&+a^2\left\{2B_n(t)
-B_n'(t)\frac{(nt-n-t)(2nt-3n+4t)}{2(n-1)t}\right\}=0.
\end{aligned}
\end{equation}
Eliminating $a^2$ between~\eqref{eq:higher-AB-substituted} and
\eqref{eq:higher-AB}, and using $\epsilon\ne0$ yields
\begin{align*}
Q_n(t):={}&A_n(t)\left\{2B_n(t)
-\frac{B_n'(t)(nt-n-t)(2nt-3n+4t)}{2(n-1)t}\right\}\\
&+\frac{A_n'(t)(nt-n-t)(2nt-3n+4t)}
 {2(n-1)t}B_n(t)=0.
\end{align*}
For $n\ge3$, the polynomial $tQ_n(t)$ has leading coefficient
\begin{equation*}
 512(n-2)(n-1)^{12}(n+1)^3(n+2)(3n+2),
\end{equation*}
which is nonzero. Hence $tQ_n(t)$ is a nonzero polynomial in these
dimensions. For $n=2$, by a direct calculation, we find
\[
 \deg(tQ_2)=10,
 \qquad \text{and its leading coefficient is }368640.
\]
Consequently, $tQ_n(t)$ is a nonzero polynomial for every $n\ge2$,
and $t$ is constant.

Combining the two cases, we conclude that $t$ is constant for every
value of $\epsilon$.

Since $w\ne0$, equation~\eqref{eq:higher-tprime} gives
\begin{equation}\label{eq:higher-candidates}
 t=\frac{n}{n-1}\qquad\text{or}\qquad
 t=\frac{3n}{2(n+2)}.
\end{equation}
For the two values in~\eqref{eq:higher-candidates}, by a direct computation, we obtain
\begin{align*}
 B_n\left(\frac{n}{n-1}\right)&=-2n^5(n-5)(n-3),\\
 B_n\left(\frac{3n}{2(n+2)}\right)
 &=-\frac{4374n^5(n-4)^2(n-1)^3}{(n+2)^6}.
\end{align*}
If $n\ne3,4,5$, equation~\eqref{eq:higher-AB} implies that
$a^2$ is constant. This contradicts $a'\ne0$. The remaining pairs for
which $B_n(t)=0$ are
\begin{equation}\label{eq:higher-exceptional-pairs}
 (n,t)=(3,3/2),\qquad (4,1),\qquad (5,5/4).
\end{equation}
For the other candidate in each of these dimensions, $B_n(t)\ne0$.
Therefore, equation~\eqref{eq:higher-AB} yields a constant value for $a^2$, contrary to
$a'\ne0$. It remains to show that the three exceptional pairs
in~\eqref{eq:higher-exceptional-pairs} are also impossible.

\medskip
\noindent\textbf{Exceptional pair $(n,t)=(3,3/2)$}\textup{:}
We have $k=-w/2$ and $\lambda=0$.
Substitution into~\eqref{eq:higher-Gauss-w} and
\eqref{eq:higher-normal-w}, respectively, gives
\begin{equation}\label{eq:higher-exception-n3}
 \begin{aligned}
 w'&=\frac12w^2+2\epsilon-\frac92a^2,\\
 w'&=\frac12w^2+\frac92a^2-6\epsilon.
 \end{aligned}
\end{equation}
Subtracting the two equations in~\eqref{eq:higher-exception-n3}, we find
$8\epsilon=9a^2$. Thus $a^2$ is constant,
contrary to $a'\ne0$.

\medskip
\noindent\textbf{Exceptional pair $(n,t)=(4,1)$}\textup{:}
Since $A_4(1)=69984\ne0$,
equation~\eqref{eq:higher-AB} forces $\epsilon=0$. Substituting
$n=4$, $t=1$, and $\epsilon=0$
into~\eqref{eq:higher-Gauss-w} and~\eqref{eq:higher-normal-w}, we obtain
\begin{equation}\label{eq:higher-exception-n4}
 \begin{aligned}
 w'&=w^2,\\
 w'&=5w^2+4a^2.
 \end{aligned}
\end{equation}
Subtracting the two equations in~\eqref{eq:higher-exception-n4}
implies $4w^2+4a^2=0$, which is impossible since $a>0$.

\medskip
\noindent\textbf{Exceptional pair $(n,t)=(5,5/4)$}\textup{:}
Since $A_5(5/4)=64000\ne0$,
equation~\eqref{eq:higher-AB} again forces $\epsilon=0$.
Substituting $n=5$, $t=5/4$, and $\epsilon=0$
into~\eqref{eq:higher-Gauss-w} and~\eqref{eq:higher-normal-w} gives
\begin{equation}\label{eq:higher-exception-n5}
 \begin{aligned}
 w'&=\frac12w^2-\frac{25}{8}a^2,\\
 w'&=2w^2+\frac{25}{4}a^2.
 \end{aligned}
\end{equation}
Subtracting the two equations in~\eqref{eq:higher-exception-n5} yields
$3w^2/2+75a^2/8=0$, which is again impossible since $a>0$.

Thus none of the three exceptional pairs occurs. Therefore,
$\grad a=0$ on $\{H\ne0\}$, and hence $|H|$ is constant on $M^n$.
\end{proof}

\section{Proof of the main theorem}\label{sec:classification}

\begin{proof}[Proof of Theorem~\ref{thm:classification-intro}]
By Proposition~\ref{prop:cmc}, $|H|$ is constant.
For $n=2$, Theorem~11 of~\cite{BiharmonicSurface2007} shows that
$\epsilon=1$ and that $f$ is locally congruent to the
model~\eqref{eq:model-intro}, where
\begin{equation}\label{eq:nu-surface}
 \nu^2=\frac{7\pm\sqrt{41}}4.
\end{equation}
The two values in~\eqref{eq:nu-surface} agree with those given
by~\eqref{eq:nu-general} when $n=2$.
For $n\ge3$, Theorem~7 of~\cite{BiminimalHumbilical2012} shows that
$\epsilon=1$, that $f$ is locally congruent to the
model~\eqref{eq:model-intro}, and that $\nu$ satisfies
\eqref{eq:nu-general}. The converse follows from Theorem~11
of~\cite{BiharmonicSurface2007} when $n=2$ and from Remark~9
of~\cite{BiminimalHumbilical2012} when $n\ge3$.
\end{proof}

\begin{corollary}\label{cor:nonpositive}
Let
$f\colon M^n\to\widetilde M^n(4\epsilon)$ be a biharmonic Lagrangian
$H$-umbilical immersion. If $\epsilon\le0$, then
$f$ is minimal.
\end{corollary}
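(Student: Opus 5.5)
The corollary should follow from Theorem~\ref{thm:classification-intro}, with one subtlety: the theorem classifies \emph{proper} biharmonic immersions, i.e.\ those that are nonminimal, whereas here we only assume biharmonicity. So the argument is by contradiction. Suppose $\epsilon\le0$ and $f$ is not minimal. Then $f$ is a biharmonic, nonminimal Lagrangian $H$-umbilical immersion, which is exactly a proper biharmonic one. Theorem~\ref{thm:classification-intro} then forces $\epsilon=1$, contradicting $\epsilon\le0$.

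The step that needs care is normalization. The theorem is stated for $\epsilon\in\{-1,0,1\}$, while the corollary allows an arbitrary $\epsilon\le0$. For $\epsilon<0$ we rescale the ambient metric by the homothety mentioned before the theorem, so that the ambient space has holomorphic sectional curvature $-4$. Under a constant homothety of the ambient metric, the following properties are all preserved:
\begin{itemize}
\item being Lagrangian, since $J$ is unchanged;
\item being $H$-umbilical, since the frame is rescaled by a constant and $\lambda,\mu$ are rescaled by a constant;
\item being biharmonic, since $\tau_2$ scales by a constant factor, as is clear from Proposition~\ref{prop:biharmonic-equations};
\item being nonminimal.
\end{itemize}
So we may indeed assume $\epsilon\in\{-1,0\}$.

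A second point is that the theorem's conclusion is local, while non-minimality might hold only on part of $M^n$. This causes no difficulty. If $H\ne0$ at some point, then $H\ne0$ on an open set $W$, and the restriction of $f$ to a connected component of $W$ is proper biharmonic. Applying the theorem to this restriction gives $\epsilon=1$, the same contradiction. Alternatively, Proposition~\ref{prop:cmc} shows that $|H|$ is constant on the connected manifold $M^n$, so $H\ne0$ somewhere already implies $H\ne0$ everywhere.

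I do not expect any real obstacle here. The only things to verify are the scaling invariance and the local nature of the classification.
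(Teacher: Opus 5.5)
Your argument is correct, but it differs from the paper's. You derive the corollary from Theorem~\ref{thm:classification-intro}: a nonminimal biharmonic $f$ is proper biharmonic, so the classification forces $\epsilon=1$. Your homothety check ($\lambda,\mu$ scale by $1/c$, $\tau_2$ by $c^{-4}$) correctly reduces an arbitrary $\epsilon<0$ to $\epsilon=-1$, and Proposition~\ref{prop:cmc} settles the local-versus-global issue.

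The paper never invokes the classification. Once $|H|$ is constant by Proposition~\ref{prop:cmc}, it takes the inner product of the normal equation~\eqref{eq:normal-biharmonic} with $H$. Since $|H|$ is constant, the Weitzenb\"ock formula shows that the rough-Laplacian term contributes exactly $|\nabla^\perp H|^2$, and one gets
\[
 |\nabla^\perp H|^2+\sum_{i=1}^n|A_He_i|^2-(n+3)\epsilon|H|^2=0.
\]
So $\epsilon\le0$ forces $A_H=0$, and hence $n|H|^2=\tr A_H=0$.

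The direct route has three advantages. It works for every real $\epsilon\le0$ without normalization. It does not depend on the external classification results (for $n=2$ and for $n\ge3$) on which Theorem~\ref{thm:classification-intro} rests. And it shows that $H$-umbilicity enters only through the constancy of $|H|$: the same computation proves that every biharmonic Lagrangian submanifold with constant mean curvature in $\widetilde M^n(4\epsilon)$, $\epsilon\le0$, is minimal. Your route is shorter once the theorem is available, but it inherits all of the theorem's dependencies and hides this simple sign mechanism.
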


\begin{proof}
By Proposition~\ref{prop:cmc}, $|H|$ is constant. Taking the inner
product of~\eqref{eq:normal-biharmonic} with $H$ and applying the
Weitzenb\"ock formula, by the same computation as in
\cite[proof of Proposition~2.4]{FetcuLoubeauMontaldoOniciuc2010},
we obtain
\[
 |\nabla^\perp H|^2
 +\sum_{i=1}^n|A_He_i|^2-(n+3)\epsilon|H|^2=0.
\]
Since $\epsilon\le0$, this equality implies $A_H=0$. Hence
$n|H|^2=\tr A_H=0$.
\end{proof}

\begin{remark}
For $\epsilon=0$, Corollary~\ref{cor:nonpositive} shows that every
biharmonic Lagrangian $H$-umbilical submanifold of
$\Cn=\mathbb R^{2n}$ is minimal. Thus it provides a partial affirmative
answer to Chen's conjecture.
\end{remark}

\end{document}